\newcommand{\qed}{$\;\;\;\Box$}
\newenvironment{proof}{\par\smallbreak{\sl\bf Proof.~}}
{\unskip\nobreak\hfill \qed \par\medbreak}
\newcounter{claim}
\renewcommand{\theclaim}{\arabic{claim}}
{\par\medskip\par}
\newcommand{\D}{{\mathcal D}}
\newcommand{\N}{{\mathbb N}}
\newcommand{\R}{{\mathbb R}}
\newcommand{\C}{{\mathbb C}}
  \newcommand{\Z}{{\mathbb Z}}
\newcommand{\M}{{\mathbb M}}
\newcommand{\A}{{\mathcal A}}
\newcommand{\B}{{\mathcal B}}
\newcommand{\LL}{{\mathcal L}}
\newcommand{\beq}{\begin{equation}}
\newcommand{\ee}{\end{equation}}
\renewcommand{\d}{\partial}
\newtheorem{thm}{Theorem}[section]
\newtheorem{lem}[thm]{Lemma}
\newtheorem{defn}[thm]{Definition}
\newcommand{\al}{\alpha}
\newcommand{\be}{\beta}
\newcommand{\ga}{\gamma}
\newcommand{\vphi}{\varphi}
\newcommand{\om}{\omega}
\newcommand{\reff}[1]{(\ref{#1})}
\newcommand{\diag}{\mathop{\rm diag}\nolimits}
\newcommand{\ess}{\mathop{\rm ess}}
\title{
Smoothing effect and Fredholm property for first-order hyperbolic PDEs
} 
\newcounter{thesame}
\author{
I.~Kmit\\
{\small
Institute of Mathematics, Humboldt University of Berlin,}
\\
{\small Rudower Chaussee 25, D-12489 Berlin, Germany }
\\
{\small
and Institute for Applied Problems of Mechanics and Mathematics, }
\\
{\small
Ukrainian Academy of Sciences,  Naukova St.\ 3b, 79060 Lviv,
Ukraine 
}
\\
{\small   E-mail:
{\tt kmit@informatik.hu-berlin.de}}
}
\date{}
\begin{document}
\maketitle

\begin{abstract}
We give an exposition of recent results on regularity and Fredholm properties for first-order 
one-dimensional hyperbolic PDEs. We show that large classes of boundary operators cause an  
effect that smoothness increases with time. This property is the key in finding regularizers 
(parametrices) for hyperbolic problems. We construct regularizers for periodic problems 
for dissipative first-order linear hyperbolic PDEs and  show that these problems are  modeled by  
Fredholm operators of index zero. 
\end{abstract}




\section{Introduction}\label{sec:intr} 
\renewcommand{\theequation}{{\thesection}.\arabic{equation}}
\setcounter{equation}{0}

In contrast to ODEs and parabolic PDEs, the Fredholm property and regularity behavior
of hyperbolic problems are much less understood. In a recent series of papers~\cite{Km,KmRe1,KR2},
the latter two written jointly with Lutz Recke, we undertook a detailed analysis of this subject
for first-order one-dimensional hyperbolic operators. The purpose of the present survey paper 
is to present  some of our results and their extensions with emphasize on the
smoothing phenomenon, construction of parametrices, and the Fredholmness of index zero.

An important step in local investigations of nonlinear differential equations 
(many ODEs and parabolic PDEs) is to establish 
the Fredholm solvability of their linearized versions. In the hyperbolic case 
this step is much more involved.
Since the singularities of (semi-)linear hyperbolic equations propagate along characteristic curves,
a solution cannot be more regular in the entire time-space domain than it is on the boundary.
It can even be less regular which is known as the \emph{loss-of-smoothness} effect.
 Therefore the Fredholm analysis of hyperbolic problems requires establishing an optimal
regularity relation between  the spaces of solutions and right-hand sides of the differential 
equations.
 
Proving a Fredholm solvability is typically based on the basic fact that any Fredholm operator is exactly a compact 
perturbation of a bijective operator. In the hyperbolic case, using the compactness argument gets complicated 
because of the lack of regularity over the \emph{whole} time-space domain.

Our approach is based on the fact that for a range of boundary operators, solutions improve smoothness
\emph{dynamically}, more precisely, they eventually become $k$-times continuously differentiable for each particular $k$.
We prove such kind of results in Section~\ref{sec:smooth}. Note that in some interesting cases
the smoothing phenomenon was shown earlier in~\cite{Elt,Hillen,LavLyu,Lyulko}.

This phenomenon allows us in Section~\ref{sec:fredh} to work out 
a regularization procedure
via construction of a parametrix. We here present a quite general approach 
to proving the Fredholmness for first-order dissipative
hyperbolic PDEs and apply it to the periodic problems. Our Fredholm results  cover 
non-strictly hyperbolic systems with discontinuous coefficients, but they are new even in the case of
strict hyperbolicity and smooth coefficients.

From a more general perspective, the smoothing effect and Fredholmness properties 
 play an important role in the study of the Hopf bifurcation  and 
periodic synchronizations in nonlinear hyperbolic PDEs~\cite{akr,KR1}  via the 
Implicit  Function Theorem and Lyapunov-Schmidt procedure~\cite{ChowHale,Ki} and averaging procedure~\cite{Bogo,Samo}. 

From the practical point of view, our techniques  cover the so-called 
traveling-wave models from laser dynamics~\cite{LiRadRe,RadWu} 
(describing the appearance of
self-pulsations of lasers and modulation of stationary laser states by time periodic electric pumping), 
population dynamics~\cite{cush,hiha,webb},  and
chemical kinetics~\cite{AkrBelZel,aris1,aris2,Zel1} (describing mass transition in terms
of convective diffusion and chemical reaction and analysis of chemical processes in counterflow 
chemical reactors).

\section{Smoothing effect}\label{sec:smooth}

Here we describe  classes of 
(initial-)boundary problems for first-order one-dimensional hyperbolic PDEs 
whose solutions improve their regularity in time. 

Set
$$
\Pi_T = \{(x,t)\,:\,0<x<1, T<t<\infty\}.
$$ 
We address the problem
\begin{equation}\label{eq:1}
(\partial_t  + a(x,t)\partial_x + b(x,t))
 u = f(x,t), 
\end{equation}
\begin{eqnarray}
&&u(x,0) = \varphi(x),
\label{eq:2}\\&&
\begin{array}{ll}
u_j(0,t) = (Ru)_j(t),\quad 1\le j\le m 
\\
u_j(1,t) = (Ru)_j(t), \quad m<j\le n
\end{array}\label{eq:3}
\end{eqnarray}
in the semi-strip $\Pi_0$ and the problem \reff{eq:1}, \reff{eq:3} in the strip $\Pi_{-\infty}$.
Here  $u=(u_1,\ldots,u_n)$, $f=(f_1,\ldots,f_n)$, and $\varphi=(\vphi_1,\dots,\vphi_n)$ are  vectors
of real-valued functions, 
$b=\{b_{jk}\}_{j,k=1}^n$ and $a=\diag(a_1,\dots,a_n)$ are matrices of real-valued functions, and 
$0\le m\le n$ are fixed integers.
Furthermore, $R$ is an operator mapping $C\left(\overline\Pi_0\right)^n$ into $C\left([0,\infty)\right)^n$, 
 and similarly for $R$ in $\Pi_{-\infty}$.
In Sections~\ref{sec:classical}--\ref{sec:dis} we give examples of $R$
as representatives of some classes of boundary operators ensuring  smoothing  
solutions.

In the domain under consideration we assume that 
\begin{equation}\label{eq:L1}
a_j>0 \mbox{ for all } j\le m\quad \mbox{ and }\quad a_j<0 \mbox{ for all } j>m,
\end{equation}
\begin{equation}\label{eq:L2}
\inf_{x,t}|a_j|>0 \mbox { for all } j\le n,
\end{equation}
and 
\beq
\label{cass}
\begin{array}{l}
\mbox{for all } 1 \le j \not= k \le n \mbox{ there exists }  p_{jk} \in C^1([0,1]\times \R)
\\\mbox{such that } 
b_{jk}=p_{jk}(a_k-a_j)  \mbox{ and } p_{jk}=0 \\ \mbox{in the interior of the domain }
\{(x,t)\,:\,a_j(x,t)=a_k(x,t)\}.
\end{array}
\ee
Note that all these conditions are not restrictive neither from the practical nor from the theoretical  points of view. In particular, 
condition \reff{eq:L1} is true in traveling-wave models of laser and population dynamics as well as chemical kinetics,
where the functions $u_j$ for $j\le m$ (respectively, $m+1\le j\le n$) describe  ``species'' traveling to
the right (respectively, to the left). Condition \reff{eq:L2} means that all characteristics of the system (\ref{eq:1})
are bounded and the system (\ref{eq:1}) is, hence, non-degenerate. Finally, the condition  \reff{cass} is a kind of Levy condition 
usually appearing to compensate non-strict hyperbolicity where the coefficients $a_j$ and $a_k$ for some $j\ne k$
coincide at least at one point, say, $(x_0,t_0)$. In this case the 
lower-order terms with the coefficients $b_{jk}$ and $b_{kj}$ contribute  to the  system 
at  $(x_0,t_0)$ longitudinally 
to characteristic directions (keeping responsibility  for the propagation of singularities), while in
the strictly hyperbolic case we have a qualitatively different transverse contribution
at that point. The purpose of \reff{cass} is to suppress propagation of singularities 
through the non-diagonal lower-order terms of (\ref{eq:1}).

We will impose the following smoothness assumptions on the initial data: The entries of $a$,
$b$, and $f$ are $C^\infty$-smooth in all their arguments in the respective domains,
while the entries of $\vphi$ are assumed to be continuous functions only. 

Let us introduce the system resulting
from (\ref{eq:1})--(\ref{eq:3}) (resp., from  (\ref{eq:1}), (\ref{eq:3})) 
via integration along characteristic curves. 
 For given $j\le n$, $x \in [0,1]$, and $t \in \R$, the $j$-th characteristic of \reff{eq:1} 
passing through the point $(x,t)$ is defined 
as the solution $\xi\in [0,1] \mapsto \om_j(\xi;x,t)\in \R$ of the initial value problem

\beq\label{char}
\partial_\xi\om_j(\xi;x,t)=\frac{1}{a_j(\xi,\om_j(\xi;x,t))},\;\;
\om_j(x;x,t)=t.
\ee
Define
\begin{eqnarray*}
c_j(\xi,x,t)=\exp \int_x^\xi
\left(\frac{b_{jj}}{a_{j}}\right)(\eta,\om_j(\eta;x,t))\,d\eta,\quad
d_j(\xi,x,t)=\frac{c_j(\xi,x,t)}{a_j(\xi,\om_j(\xi;x,t))}.
\end{eqnarray*}
Due to \reff{eq:L2}, the characteristic curve $\tau=\om_j(\xi;x,t)$ reaches the
boundary of $\Pi_T$ in two points with distinct ordinates. Let $x_j(x,t)$
denote the abscissa of that point whose ordinate is smaller.
Straightforward calculations show that a $C^1$-map $u: [0,1]\times[0,\infty) \to \R^n$ is a solution to 
(\ref{eq:1})--(\ref{eq:3}) if and only if
it satisfies the following system of integral equations
\begin{eqnarray}
\label{rep}
\lefteqn{
u_j(x,t)=(BSu)_j(x,t)}\nonumber\\
&&-\int_{x_j(x,t)}^x d_j(\xi,x,t)\sum_{k=1\atop k\not=j}^n b_{jk}(\xi,\om_j(\xi;x,t))u_k(\xi,\om_j(\xi;x,t))d\xi\nonumber\\ 
&&+\int_{x_j(x,t)}^x d_j(\xi,x,t)f_j(\xi,\om_j(\xi;x,t))d\xi,\quad j\le n,
\end{eqnarray}
where
\begin{eqnarray}
\label{B}
(Bu)_j(x,t)=c_j(x_j(x,t),x,t)u_j\left(x_j(x,t),\om_j(x_j(x,t);x,t)\right),
\end{eqnarray}
\begin{eqnarray}
\label{S}
(Su)_j(x,t)=
\begin{cases}
(Ru)_j(t) & \text{if $ t>0, $} \\
\vphi_j(x)      & \text{if $t=0.$}
\end{cases}
\end{eqnarray}
Here $B$ is a shifting operator from $\d\Pi_0$ along
characteristic curves of (\ref{eq:1}), while the operator 
$S$ is used to denote the boundary operator on the whole $\d\Pi_0$. Similarly,
a $C^1$-map $u: [0,1]\times\R \to \R^n$ is a solution to 
(\ref{eq:1}), (\ref{eq:3}) if and only if
it satisfies the system \reff{rep}, where the definition of $S$ is changed to $S=R$.

This motivates the following definition:
\begin{defn}\label{defn:cont}\rm
{\bf (1)} A continuous  function $u$ is called a continuous solution to  (\ref{eq:1})--(\ref{eq:3}) in $\overline\Pi_0$
if it satisfies \reff{rep} with $S$ defined by \reff{S}.

{\bf (2)} A continuous  function $u$ is called a continuous solution to  (\ref{eq:1}), (\ref{eq:3}) in $\overline\Pi_{-\infty}$
if it satisfies \reff{rep} with $S=R$.
\end{defn}

Existence results for (continuous) solutions to the problems under consideration are obtained in~\cite{AbMy,serb,ijdsde,KmHo}.

\begin{defn}\label{defn:smoothing}\rm
 A solution $u$ to the problem (\ref{eq:1})--(\ref{eq:3}) or (\ref{eq:1}), (\ref{eq:3}) is called {\it smoothing} 
if, for every $k\in\N$, there exists $T>0$ such that $u_j\in C^k\left(\overline\Pi_T\right)$ for all
$j\le n$.
\end{defn}

For the initial-boundary value problem (\ref{eq:1})--(\ref{eq:3}) Definition~\ref{defn:smoothing} reflects 
a dynamic nature of the smoothing property stating 
that the regularity of solutions increases in time. The fact that the regularity cannot be uniform in the entire
domain is a straightforward consequence of the propagation of singularities along characteristic curves.
Moreover, switching from $C^k$ to $C^{k+1}$-regularity is jump-like; this phenomenon is usually observed 
in the situations when solutions of hyperbolic PDEs change their 
regularity (see e.g.,~\cite{Lyulko,Ober86,popivanov03,RauchReed81}).

Note that, if the problem (\ref{eq:1}), (\ref{eq:3}) is subjected to periodic conditions in~$t$, then
Definition~\ref{defn:smoothing} implies that the smoothing solutions immediately meet the $C^\infty$-regularity in the
entire domain.

Definition~\ref{defn:smoothing} captures the general nature of the smoothing phenomenon for hyperbolic PDEs.
A more precise information can be extracted from the proof of 
Theorems~\ref{thm:classical}, \ref{thm:population}, and \ref{thm:dissipative} below: 
Reaching  the $C^k$-regularity for solutions
needs only a $C^{k+1}$-regularity for $a$, $b$, and $f$. More exact regularity conditions for
the boundary data, which also depend on $k$, can be derived from these proofs as well.
These refinements are useful in some applications. 

Definition~\ref{defn:smoothing} can be strengthened by admitting
 worse regularities for the initial data. One extension of this kind, when the initial data are 
strongly singular distributions concentrated at a finite number of points, can be found in~\cite{Km}.
In~\cite{Km} we used a delta-wave solution concept. Another result in this direction~\cite{KmRe1,KR2}
concerns periodic problems and uses a variational  setting of the problem
(see also Theorem~\ref{thm:fredh} (ii)). In~\cite{KmRe1,KR2} we get an improvement of the solution regularity from 
being functionals to being functions.

In what follows we demonstrate the smoothing effect on generic examples of  large classes of boundary operators
and  show which kinds of problems can be covered by our techniques. 
Our approach to establishing smoothing results is based on the consideration of the
integral representation of the problems and the observation that
the boundary and the integral parts of this 
representation  have different influence  on the regularity of solutions.
Our main idea is to show that the integral part has a ``self-improvement'' property, while in many interesting cases
the boundary part is not responsible for propagation of singularities. The latter  contrasts to the case 
of the Cauchy problem  where the solutions cannot be smoothing as the boundary term all the time "remembers"
the regularity of the initial data. It is worthy to note that in the case of 
the problem (\ref{eq:1})--(\ref{eq:3}) in~$\Pi_0$ the 
domain of influence of the initial conditions is determined by both
parts of the integral system and is in general infinite. This makes the smoothing effect non-obvious.

\subsection{Classical boundary conditions}\label{sec:classical}

Here we specify conditions (\ref{eq:3}) to
\begin{eqnarray}
\begin{array}{ll}
u_j(0,t) = h_j(t), & 1\le j\le m, \ \  
\\
u_j(1,t) = h_j(t), & m<j\le n. \quad\quad\ \, 
\end{array}&&\label{eq:3''}
\end{eqnarray}
and consider the problem (\ref{eq:1}), (\ref{eq:2}), \reff{eq:3''}.

\begin{thm}\label{thm:classical}
 Assume that the data $a_j$, $b_{jk}$, $f_j$, and $h_j$ are 
smooth in all their arguments and $\vphi_j$ are continuous functions.
Assume also  
 (\ref{eq:L1}),  (\ref{eq:L2}), and \reff{cass}. Then 
any continuous solution to the problem (\ref{eq:1}), (\ref{eq:2}), (\ref{eq:3''})  
is smoothing.
\end{thm}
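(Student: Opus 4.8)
The plan is to extract everything from the integral representation \reff{rep} (with $S$ given by \reff{S}) and to use the fact that, for the classical conditions \reff{eq:3''}, the boundary operator $R$ merely returns the prescribed smooth data $h_j$. The first step is a finite-speed observation. By \reff{eq:L2} the number $\tau_0:=1/\min_{j\le n}\inf_{x,t}|a_j|$ is finite, and it bounds the time a characteristic needs to cross the strip $0\le x\le 1$. Hence, for any $(x,t)$ with $t>\tau_0$, the backward $j$-th characteristic through $(x,t)$ meets the lateral boundary (at $x=0$ if $j\le m$, at $x=1$ if $j>m$, by \reff{eq:L1}) at an ordinate $\om_j(x_j;x,t)\ge t-\tau_0>0$, \emph{before} it can reach the initial line $t=0$. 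Consequently $x_j(x,t)$ is constant ($0$ or $1$) on $\Pi_{\tau_0}$, the maps $\om_j,c_j,d_j$ are smooth there, and the boundary term in \reff{rep} reduces to $(BSu)_j(x,t)=c_j(x_j,x,t)\,h_j(\om_j(x_j;x,t))$, a composition of $C^\infty$ data. Thus on $\Pi_{\tau_0}$ the boundary term is smooth and, decisively, \emph{decoupled} from the unknown interior values of $u$: the singular initial datum $\vphi$ has been left behind.

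Next I would put $T_k:=k\tau_0$ and prove by induction on $k$ that $u_j\in C^k(\overline{\Pi_{T_k}})$ for all $j\le n$, the case $k=0$ being the assumption that $u$ is a continuous solution. For the inductive step, suppose $u_j\in C^\ell(\overline{\Pi_{T_\ell}})$ for all $j$ and take $(x,t)\in\overline{\Pi_{T_{\ell+1}}}$. The entire backward characteristic arc joining $(x,t)$ to the lateral boundary lies in $\overline{\Pi_{T_\ell}}$, since its ordinates are $\ge t-\tau_0\ge T_\ell$; hence every $u_k$ appearing under the integral signs of \reff{rep} is sampled on the region where it is already $C^\ell$. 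In \reff{rep} the boundary term is $C^\infty$ by the first step, and the forcing integral $\int_{x_j}^x d_j f_j\,d\xi$ is $C^\infty$ because $a,b,f$ and the characteristic maps are smooth while the limits are constant on $\Pi_{\tau_0}$. Everything therefore comes down to showing that each coupling integral $\int_{x_j}^x d_j\,b_{jk}\,u_k\,d\xi$ with $k\neq j$ is one order \emph{smoother} than $u_k$, i.e. belongs to $C^{\ell+1}$. Granting this self-improvement, the right-hand side of \reff{rep}, and with it $u_j$, is $C^{\ell+1}$ on $\overline{\Pi_{T_{\ell+1}}}$ for every $j$, which closes the induction and gives exactly the smoothing property of Definition~\ref{defn:smoothing}.

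The heart of the argument, and the step I expect to be the main obstacle, is this gain of one derivative in the coupling integrals, and it is precisely here that the condition \reff{cass} is used. The integrand $b_{jk}u_k=p_{jk}(a_k-a_j)u_k$ is integrated along a $j$-characteristic, whereas the possible non-smoothness of $u_k$ is carried along the transversal $k$-characteristics. Where $a_j=a_k$ the two families become tangent, but there $b_{jk}=0$ annihilates the term, so only transversal crossings contribute; integrating $u_k$ across a transversal crossing raises its regularity by one, the factor $(a_k-a_j)$ measuring the crossing angle and keeping the gain uniform. The technical realization is delicate: one differentiates the integral in $(x,t)$, uses the transport equation satisfied by $u_k$ to express the $x$-derivative that falls on $u_k$ as $\partial_\xi\bigl(u_k(\xi,\om_j(\xi;x,t))\bigr)$ up to smooth lower-order terms --- this is the very place where the prefactor $(a_k-a_j)$ is indispensable --- integrates by parts so that the surviving derivative lands on the smooth factors $d_j,p_{jk},\om_j$ or on the smooth boundary values $h_k$, and controls all estimates uniformly as the crossing angle degenerates near the set $\{a_j=a_k\}$. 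Once this lemma is secured the above bootstrap runs by itself, and keeping track of the regularity actually consumed shows, as announced after Definition~\ref{defn:smoothing}, that attaining $C^k$ costs only $C^{k+1}$-smoothness of $a,b,f$.
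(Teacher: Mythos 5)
Your overall strategy---pass to the integral representation \reff{rep}, use \reff{eq:L2} to fix a time beyond which the boundary term $BSu$ reduces to the smooth data $h$ and decouples from $\vphi$, then bootstrap one derivative at a time using the Levy condition \reff{cass} together with an integration by parts along characteristics---is the paper's. But your central lemma, that the \emph{single} coupling integral $(Du)_j=-\int_{x_j}^x d_j\sum_{k\ne j}b_{jk}\,u_k(\xi,\om_j(\xi;x,t))\,d\xi$ maps $C^\ell$ into $C^{\ell+1}$, has a genuine gap at the bottom of every rung of the bootstrap. Differentiating under the integral in $t$ produces $\d_2u_k\cdot\d_t\om_j$, and to trade this for the total derivative $\d_\xi\bigl[u_k(\xi,\om_j(\xi;x,t))\bigr]=\d_1u_k+\tfrac{1}{a_j}\d_2u_k$ that you can integrate by parts, you must eliminate $\d_1u_k$ via the transport equation. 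That requires the function in question (at the first rung $u_k$ itself, at the $r$-th rung $\d_t^{r-1}u_k$) to be a classical $C^1$ solution of a first-order PDE---which is precisely what is to be proved at that rung; for a merely continuous solution the pointwise equation is not available. A density argument does not rescue the single $D$: smooth approximants $u^l\to u$ do not satisfy the transport equation, so the unwanted term $\d_1u^l_k$ survives and has no uniform limit.

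The paper's way around this is structural: it iterates the equation once (formula \reff{io}) and proves the smoothing property for $D^2$ rather than for $D$. After exchanging the order of integration, the unknown appears as $u_i^l\bigl(\eta,\om_k(\eta;\xi,\om_j(\xi;x,t))\bigr)$ with its \emph{first} argument equal to the outer integration variable $\eta$, which is frozen during the inner $\xi$-integration. The $t$-derivative then falls only on the second slot, and the chain rule alone (no PDE for $u$) gives $\d_3\om_k\,\d_2u_i^l=\tfrac{a_ja_k}{a_k-a_j}\,\d_\xi\bigl[u_i^l(\eta,\om_k(\eta;\xi,\om_j(\xi;x,t)))\bigr]$; the singular factor is cancelled by $b_{jk}=p_{jk}(a_k-a_j)$ from \reff{cass}, and one integration by parts in $\xi$ leaves an expression containing no derivatives of $u_i^l$ at all, hence convergent uniformly as $l\to\infty$. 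You should therefore replace your one-derivative-gain claim for $D$ by the corresponding claim for $D^2$, and at the higher rungs first derive the integral equation satisfied by $v=\d_t^ru$ (with the modified kernels $\tilde c_j,\tilde d_j$) and apply the same $\tilde D^2$ argument to $v$. The remaining parts of your outline---the finite-speed decoupling of the boundary term, the smoothness of $Bh$, $DBh$ and $(I+D)Ff$, the identification of where \reff{cass} enters, and the recovery of $x$-regularity from the differentiated PDE---are correct and agree with the paper.
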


Note that in the case of smooth classical
 boundary conditions (\ref{eq:3''}),
the domain of influence of the initial data $\vphi(x)$ on $u_i$ for every $i\le n$ in general is unbounded
 (due to the lower-order terms in (\ref{eq:1})).
In spite of this, the influence of the initial data on the regularity of $u$ becomes
weaker and weaker in time causing the smoothing effect.

\begin{proof}
Suppose that $u$  is a continuous solution to the problem (\ref{eq:1})--(\ref{eq:3})
and show that the operator of the problem improves the regularity 
of $u$ in time. The idea of the proof is similar to~\cite{Km}.

We start with  an operator representation of $u$. To this end, introduce 
 linear bounded operators $D, F: C\left(\overline\Pi_0\right)^n \to C\left(\overline\Pi_0\right)^n$
 by
\begin{eqnarray*}
\left(Du\right)_j(x,t) & = & 
-\int_{x_j(x,t)}^x d_j(\xi,x,t)\sum_{k=1\atop k\not=j}^n\left(b_{jk}u_k\right)(\xi,\om_j(\xi;x,t))d\xi,\\
\left(Ff\right)_j(x,t)&=&\int_{x_j(x,t)}^x d_j(\xi,x,t)f_j(\xi,\om_j(\xi;x,t))d\xi.
\end{eqnarray*}
Note that $Ff$ is a smooth function in $x,t$.
In this notation the integral system  (\ref{rep}) can be written as
\beq\label{abstr}
u=BSu+Du+Ff.
\ee
It follows that
\beq\label{io}
u=BSu+(DBS+D^2)u+(I+D)Ff.
\ee

In the first step we prove that the right hand side of \reff{io} restricted to $\overline\Pi_{T_1}$ for some $T_1>0$
is continuously differentiable in $t$. 
The $C^1\left(\overline\Pi_{T_1}\right)^n$-regularity of $u$ will then follow from the fact that
$u$ given by \reff{rep} satisfies (\ref{eq:1}) in the distributional sense. 
By the assumption \reff{eq:L2}, we can fix a large enough  $T_1>0$ such that the 
operator $S$ in the right-hand side of 
\reff{io} restricted to $\overline\Pi_{T_1}$ does not depend on $\vphi$ and, hence,
$Su=Ru=h$, where $h=(h_1,\dots,h_n)$. 
We therefore arrive at  the equality
\beq\label{io1}
u|_{\overline\Pi_{T_1}}=Bh+DBh+D^2u+(I+D)Ff,
\ee
where $u|_{\overline\Pi_{T_1}}$ denotes the restriction of $u$ to $\overline\Pi_{T_1}$.
By the regularity assumption on $a,b,f$, and $h$, 
 the function $Bh+DBh+(I+D)Ff$ is smooth.
We have reduced the problem to show that the operator $D^2$ is smoothing, more specifically, that $D^2u$ is $C^1$-smooth 
in $t$ on $\overline\Pi_{T_1}$. 

Notice that for $t\ge T_1$ the function $x_j(x,t)$ is a constant depending only on~$j$. Below we therefore will drop 
the dependence of $x_j$ on $x$ and $t$.
Fix a sequence $u^l\in C^1\left(\overline\Pi_0\right)^n$ such that 
\beq\label{eq:lim_0}
u^l\to u \mbox{ in } C\left(\overline\Pi_0\right)^n \mbox{ as } l\to\infty. 
\ee
By convergence in $C\left(\Omega\right)^n$ here and below we mean the
 uniform convergence on any compact subset of $\Omega$.
Then $D^2u^l\to D^2u$ in $C\left(\overline\Pi_0\right)^n$ as well. It suffices to prove 
that $\d_t\left[D^2u^l\right]$ converges in  $C\left(\overline\Pi_{T_1}\right)^n$ as $l\to\infty$. 
Given $j\le n$, consider the following expression for $\left(D^2u^l\right)_j(x,t)$,
obtained by change of the order of integration:
\begin{eqnarray}
\lefteqn{
\left(D^2u^l\right)_j(x,t)}\label{D11}\\
&&=\sum_{k=1\atop k\not=j}^n\sum_{i=1\atop i\not=k}^n
\int_{x_j}^x \int_\eta^x d_{jki}(\xi,\eta,x,t)b_{jk}(\xi,\om_j(\xi;x,t))u_i^l(\eta,\om_k(\eta;\xi,\om_j(\xi;x,t))) d \xi d \eta
\nonumber
\end{eqnarray}
with
\begin{eqnarray*}
d_{jki}(\xi,\eta,x,t)
=d_j(\xi,x,t)d_k(\eta,\xi,\om_j(\xi;x,t))b_{ki}(\eta,\om_k(\eta;\xi,\om_j(\xi;x,t))).
\label{djkl}
\end{eqnarray*}
It follows that
\begin{eqnarray}
\lefteqn{
\d_t\left[\left(D^2u^l\right)_j(x,t)\right]
}\nonumber\\\nonumber
&=&\sum_{k=1\atop k\not=j}^n\sum_{i=1\atop i\not=k}^n
\int_{x_j}^x \int_\eta^x \d_t\bigl[d_{jki}(\xi,\eta,x,t)b_{jk}(\xi,\om_j(\xi;x,t))\bigr]
u_i^l(\eta,\om_k(\eta;\xi,\om_j(\xi;x,t))) d \xi d \eta\nonumber\\
&+&\sum_{k=1\atop k\not=j}^n\sum_{i=1\atop i\not=k}^n
\int_{x_j}^x \int_\eta^x {d}_{jki}(\xi,\eta,x,t)b_{jk}(\xi,\om_j(\xi;x,t))\nonumber\\
&\times&\d_3\om_k(\eta;\xi,\om_j(\xi;x,t))\d_t\om_j(\xi;x,t)\d_2u_i^l(\eta,\om_k(\eta;\xi,\om_j(\xi;x,t))) d \xi d \eta,\label{dtD}
\end{eqnarray}
where $\d_rg$ here and below  denotes the derivative of $g$ with respect to the $r$-th argument. 
The first summand in the right-hand side converges in $C\left(\overline\Pi_{T_1}\right)$.
Our task is therefore reduced to show the uniform convergence 
of all integrals in the second summand, whenever $(x,t)$ varies on a compact subset of $\overline\Pi_{T_1}$. For this purpose we will transform the integrals as follows.
First note that the assumption \reff{cass} determines the function $p_{jk}$ uniquely.
Using  \reff{cass} and the formulas
\begin{eqnarray}
\d_x\om_j(\xi;x,t) & = & -\frac{1}{a_j(x,t)} \exp \int_\xi^x \left(\frac{\d_ta_j}{a_j^2}\right)(\eta,\om_j(\eta;x,t)) d \eta,\\
\label{dt}
\d_t\om_j(\xi;x,t) & = & \exp \int_\xi^x \left(\frac{\d_ta_j}{a_j^2}\right)(\eta,\om_j(\eta;x,t)) d \eta,
\end{eqnarray}
we get
\begin{eqnarray*}
\lefteqn{\int_{x_j}^x \int_\eta^x {d}_{jki}(\xi,\eta,x,t)b_{jk}(\xi,\om_j(\xi;x,t))}\\
&\times&\d_3\om_k(\eta;\xi,\om_j(\xi;x,t))\d_t\om_j(\xi;x,t)\d_2u_i^l(\eta,\om_k(\eta;\xi,\om_j(\xi;x,t))) d \xi d \eta\\
&=&\int_{x_j}^x \int_\eta^x {d}_{jki}(\xi,\eta,x,t)\d_3\om_k(\eta;\xi,\om_j(\xi;x,t))\d_t\om_j(\xi;x,t)\\
&\times&b_{jk}(\xi,\om_j(\xi;x,t))\Bigl[\bigl(\d_\xi\om_k\bigr)(\eta;\xi,\om_j(\xi;x,t))\Bigr]^{-1}\bigl(\d_\xi u_i^l\bigr)
(\eta,\om_k(\eta;\xi,\om_j(\xi;x,t))) d \xi d \eta\\
&=&\int_{x_j}^x \int_\eta^x {d}_{jki}(\xi,\eta,x,t)\d_t\om_j(\xi;x,t)\bigl(a_ka_jp_{jk}\bigr)(\xi,\om_j(\xi;x,t))\\
&\times&
\bigl(\d_\xi u_i^l\bigr)(\eta,\om_k(\eta;\xi,\om_j(\xi;x,t))) d \xi d \eta\\
&=&\int_{x_j}^x \int_\eta^x \tilde{d}_{jki}(\xi,\eta,x,t)\bigl(\d_\xi u_i^l\bigr)(\eta,\om_k(\eta;\xi,\om_j(\xi;x,t))) d \xi d \eta\\
\nonumber&=&-\int_{x_j}^x\int_\eta^x \d_\xi \tilde{d}_{jki}(\xi,\eta,x,t)u_i^l\left(\eta,\om_k(\eta;\xi,\om_j(\xi;x,t))\right) d\xi d\eta\\
&+&\int_{x_j}^x\Big[\tilde{d}_{jki}(\xi,\eta,x,t)u_i^l\left(\eta,\om_k(\eta;\xi,\om_j(\xi;x,t))\right)\Big]_{\xi=\eta}^{\xi=x}d\eta.
\end{eqnarray*}
Here
$$
\tilde{d}_{jki}(\xi,\eta,x,t) = {d}_{jki}(\xi,\eta,x,t)\d_t\om_j(\xi;x,t)
\left(
a_ka_jp_{jk}
\right)
(\xi,\om_j(\xi;x,t)).
$$
Now, the desired convergence follows from \reff{eq:lim_0}.

In the second step we prove that there exists $T_2>T_1$ such that $\d_tu$ restricted to $\overline\Pi_{T_2}$
is $C^1$-smooth in $t$ on $\overline\Pi_{T_2}$.
 Once this is done, we differentiate (\ref{eq:1}) with respect to $t$ and get $\d_{xt}^2u\in C\left(\overline\Pi_{T_2}\right)^n$; 
differentiating (\ref{eq:1}) with respect to $x$, we get $\d_{x}^2u\in C\left(\overline\Pi_{T_2}\right)^n$. 
We will be able to conclude that $u\in C^2\left(\overline\Pi_{T_2}\right)^n$,
as desired. To prove the existence of  $T_2$, let $v=\d_tu$. Differentiation of
(\ref{eq:1}) formally in $t$ leads to
\begin{eqnarray*}
(\partial_t  + a_j\partial_x)v_j + \sum_{k=1}^nb_{jk} v_k + \sum_{k=1}^n\d_tb_{jk} u_k + \d_ta_j\partial_xu_j = \d_tf_j.
\end{eqnarray*}
Combining this with (\ref{eq:1}), we obtain
\begin{eqnarray}
\label{e1}
\lefteqn{(\partial_t  + a_j\partial_x)v_j + \sum_{k=1}^nb_{jk} v_k - \frac{\d_ta_j}{a_j}v_j}\nonumber \\
&&= \d_tf_j - \sum_{k=1}^n\d_tb_{jk} u_k 
+ \frac{\d_ta_j}{a_j}\left(\sum_{k=1}^nb_{jk}u_j-f_j\right)=G_j(f_j,\d_tf_j,u).
\end{eqnarray}
Here, for each $j\le n$, $G_j$  is a certain linear function  with smooth coefficients.
Set 
\beq\label{tilde_c}
\tilde c_j(\xi,x,t)=\exp \int_x^\xi
\left(\frac{b_{jj}}{a_{j}}-\frac{\d_ta_j}{a_{j}^2}\right)(\eta,\om_j(\eta;x,t))\,d\eta,\quad
\tilde d_j(\xi,x,t)=\frac{\tilde c_j(\xi,x,t)}{a_j(\xi,\om_j(\xi;x,t))}\nonumber
\ee
and introduce three linear operators $\tilde B, \tilde D, \tilde F: C\left(\overline\Pi_0\right)^n \to C\left(\overline\Pi_0\right)^n$ by
\begin{eqnarray}
\left(\tilde Bu\right)_j(x,t)&=&\tilde c_j(x_j,x,t)u_j\left(x_j,\om_j(x_j;x,t)\right),\label{B1}\\
\left(\tilde Du\right)_j(x,t) & = & -\int_{x_j}^x \tilde d_j(\xi,x,t)\sum_{k=1\atop k\not=j}^n\left(b_{jk}u_k\right)(\xi,\om_j(\xi;x,t))d\xi,\label{D1}\\
\left(\tilde Ff\right)_j(x,t)&=&\int_{x_j}^x \tilde d_j(\xi,x,t)f_j(\xi,\om_j(\xi;x,t))d\xi.\label{F1}
\end{eqnarray}
Similarly to the above, our starting point is that for any $T_2\ge T_1$ the function
$v$ satisfies the following operator equation
resulting  from  (\ref{e1}):
\begin{eqnarray*}
v|_{\overline\Pi_{T_2}}&=&\tilde Bh^\prime+\tilde Dv+\tilde FG(f,\d_tf,u),
\end{eqnarray*}
and, hence, the equation
\beq\label{io_t}
v|_{\overline\Pi_{T_2}}=\tilde Bh^\prime+\tilde D\tilde Bh^\prime+\tilde D^2v+(I+\tilde D)\tilde FG(f,\d_tf,u),
\ee
where $G=(G_1,\dots,G_n)$ and $h^\prime=\left(h_1^\prime,\dots,h_n^\prime\right)$.
Again, due to the assumption \reff{eq:L2}, we can  fix $T_2>T_1$ such that 
the right-hand side of \reff{io_t} does not depend on $u$ and $v$ in  $\overline\Pi\setminus\Pi_{T_1}$. Due to 
Step 1,  the function $(I+\tilde D)\tilde FG(f,\d_tf,u)$ then meets
 the $C^1_t$-regularity. Moreover, $\tilde Bh^\prime+\tilde D\tilde Bh^\prime\in C^\infty$.
 We  are thus left to show that
the operator $\tilde D^2$ is smoothing in the above sense. As $\tilde D$
is exactly the operator $D$ with $c_j$ and $d_j$ replaced by
the smooth functions $\tilde c_j$ and $\tilde d_j$, the desired smoothing property of
$\tilde D^2$ follows from the proof of the smoothness of $D^2$ and the fact that
$\tilde D^2v$ in \reff{io_t} does not depend on $v$ in  $\overline\Pi\setminus\Pi_{T_1}$.

Proceeding further  by induction, assume that, given $r\ge 2$,  
there is $T_{r}>0$ such that $u\in C^{r}\left(\overline\Pi_{T_{r}}\right)^n$
and prove that $u$ meets the $C^{r+1}$-regularity in $t$ on $\overline\Pi_{T_{r+1}}$ for some $T_{r+1}>T_{r}$. 
Set $w=\d_t^ru$. Differentiating (\ref{eq:1}) and (\ref{eq:3}) $r$-times in $t$, we come to our starting
operator equation for $w$, namely
\beq\label{io_rt}
\begin{array}{cc}
w|_{\overline\Pi_{T_{r+1}}}=\tilde Bh^{(r)}+\tilde D\tilde Bh^{(r)}+\tilde D^2w
+(I+\tilde D)\tilde F\tilde G(f,\d_tf,\dots,\d_t^rf,u,\d_tu,\dots,\d_t^{r-1}u), 
\end{array}
\ee
where $\tilde G$ is a vector of certain linear functions   with smooth coefficients
and the operators $\tilde B,\tilde D$, and $\tilde F$ are modified by $\tilde c_j(\xi,x,t)$ in \reff{B1}--\reff{F1}
changing  to $\tilde c_j(\xi,x,t)=\exp \int_x^\xi
\left(\frac{b_{jj}}{a_{j}}-r\frac{\d_ta_j}{a_{j}^2}\right)(\eta,\om_j(\eta;x,t))\,d\eta$. 
Similarly to the above, fix $T_{r+1}>T_{r}$ such that 
the right-hand side of \reff{io_rt} does not depend on $u,\d_tu,\dots,\d_t^{r-1}u$, 
and $w$ in  $\overline\Pi\setminus\Pi_{T_{r}}$.
This ensures that the last two summands in \reff{io_rt} are  $C_t^1$-functions. 
The first two summands are $C_t^1$-smooth by the regularity assumptions on the data.
Finally, the $C^{r+1}\left(\overline\Pi_{T_{r+1}}\right)$-regularity of $u$ follows from 
the previous steps of the proof and
suitable differentiations of the system (\ref{eq:1}).
\end{proof}

Theorem~\ref{thm:classical} can be extended over the boundary operators of the 
following kind (both linear and nonlinear). Given $T>0$, in the domain $\Pi_T$
let us consider the problem   (\ref{eq:1})--(\ref{eq:3})  with 
$b_{jk}\equiv 0$ for all $j\ne k$
(i.e., the system (\ref{eq:1}) is decoupled) and with (\ref{eq:2}) 
replaced by $u(x,T)=\vphi(x)$ (the initial values are given at $t=T$). 
This entails that the domain of influence of  $\vphi$ now depends only on the 
boundary conditions. For the latter it is supposed  that, for every $T>0$ and $\vphi(x)$, 
the function $\vphi(x)$ has a bounded domain of
influence on $u$. In other words, for any decoupled system (\ref{eq:1}), 
if $\vphi(x)$ has a singularity at some point $x\in[0,1]$, then this singularity 
 expands  along a
finite number of characteristic curves (we have a finite number of "reflections"
from the boundary), and this number is bounded from above uniformly  in $x\in[0,1]$.
 This class of boundary operators is 
in detail described in~\cite{Km}, where the necessary and sufficient conditions for smoothing solutions
are given. The results of~\cite{Km} generalize the smoothing results obtained in~\cite{Elt,Hillen,LavLyu,Lyulko} 
for the system \reff{eq:1} with time-independent coefficients and (a kind of) Dirichlet boundary conditions.

\subsection{Integral  boundary conditions in age structured population models}

Here we address another class of boundary operators  admitting smoothing solutions. 
Though it covers a range of (partial) integral operators, we illustrate our smoothing result 
with an example from population dynamics.

Integral boundary conditions are usually used in continuous age structured population models to 
describe a fertility of populations. Let $u(x,t)$ denote the density of a population of age $x$ at time $t$.
Then the dynamics of $u$ can be described by the following model (see, e.g.~\cite{HadeDiet,Magal,webb} and references therein):
\begin{eqnarray}
(\partial_t  + \partial_x + \mu)
 u &=& 0, \quad (x,t)\in\overline\Pi_0,\label{eq:1ex}
\\
u(x,0) &=& \varphi(x),
\quad\ \, x\in [0,1],
\label{eq:2ex}\\
u(0,t)& =& h\left(\int_0^1\ga(x)u(x,t)\, dx\right),\quad t\in\R,\label{eq:3ex}
\end{eqnarray}
where $\mu>0$ is the mortality rate of the population and the functions $h$ and $\ga$ describe the fertility of the population.
Without losing potential applicability to the topic of population dynamics, $h$ and $\ga$ are supposed to be  $C^\infty$-smooth functions. The integral in 
(\ref{eq:3ex}) is a kind of the so-called ``partial'' integral, since $u$ depends not only on the variable of integration $x$,
but also on the free variable~ $t$. Therefore the right-hand side of (\ref{eq:3ex}) is not 
smoothing. Nevertheless,  it turns out that it is regular enough to contribute into smoothing solutions. 

\begin{thm}\label{thm:population}
 Assume that  $h$ and $\ga$ are $C^\infty$-smooth 
functions and $\vphi$ is a continuous function.
Then any continuous solution to the problem (\ref{eq:1ex})--(\ref{eq:3ex}) 
is smoothing.
\end{thm}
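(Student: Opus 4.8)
The plan is to use the scalar, decoupled structure of (\ref{eq:1ex})--(\ref{eq:3ex}) to reduce the smoothing of $u$ to the smoothing of the single fertility functional
\[
V(s):=\int_0^1\ga(y)\,u(y,s)\,dy .
\]
Here $n=1$, $a\equiv 1$, $b_{11}\equiv\mu$ and $f\equiv 0$, so the characteristics are the lines $\om(\xi;x,t)=t-x+\xi$, one has $c_1=d_1=e^{\mu(\xi-x)}$, and the sum over $k\ne j$ in (\ref{rep}) is empty. Hence the off-diagonal ``self-improving'' operator $D$ that drove the proof of Theorem~\ref{thm:classical} is absent here, and (\ref{rep}) collapses to $u=BSu$. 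In particular, for $t>1$ (where $x_1(x,t)=0$ for all $x\in[0,1]$) the representation reads
\[
u(x,t)=e^{-\mu x}(Ru)(t-x)=e^{-\mu x}\,h\bigl(V(t-x)\bigr),
\]
so all the regularity of $u$ must now be manufactured by the boundary operator $R$ through $V$.

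First I would close an equation for $V$ alone. For $s>1$ every point $(y,s)$ with $y\in[0,1]$ satisfies $s>y$, so $u(y,s)=e^{-\mu y}h(V(s-y))$ and, substituting into the definition of $V$ and setting $\sigma=s-y$,
\[
V(s)=\int_0^1\ga(y)e^{-\mu y}h\bigl(V(s-y)\bigr)\,dy=\int_{s-1}^{s}K(s-\sigma)\,h\bigl(V(\sigma)\bigr)\,d\sigma,\qquad K(r):=\ga(r)e^{-\mu r}.
\]
This is a width-one delay Volterra equation with $C^\infty$ kernel $K$: the $x$-integration against the smooth weight $\ga$ has turned into a time-convolution, which is the precise mechanism by which the non-smoothing partial-integral boundary condition (\ref{eq:3ex}) nonetheless feeds regularity into the solution.

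The core is a bootstrap on $V$. Since $u$ is continuous, so is $V$ on $[0,\infty)$; write $g:=h\circ V$. Differentiating the equation for $V$ by the Leibniz rule (legitimate since $K\in C^\infty$ and $g$ is continuous) gives
\[
V'(s)=K(0)g(s)-K(1)g(s-1)+\int_{s-1}^{s}K'(s-\sigma)g(\sigma)\,d\sigma ,
\]
which is continuous on $(1,\infty)$, so $V\in C^1(1,\infty)$. More generally the claim is $V\in C^{k}$ on $(k,\infty)$ for every $k\in\N$, proved by induction: if $V\in C^{k}(k,\infty)$ then $g=h\circ V\in C^{k}(k,\infty)$, and in the displayed formula the non-delayed term $K(0)g(s)$ is $C^{k}$ on $(k,\infty)$, while both the delayed term $K(1)g(s-1)$ and the integral term (differentiated under the integral sign) require $g\in C^{k}$ on the whole window $[s-1,s]$ and are thus $C^{k}$ only on $(k+1,\infty)$. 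Hence $V'\in C^{k}(k+1,\infty)$, i.e.\ $V\in C^{k+1}(k+1,\infty)$, closing the induction.

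Finally I would transfer this to $u$. Fix $k\in\N$ and put $T=k+2$. For $(x,t)\in\overline\Pi_{T}$ one has $t-x\ge T-1>k$, so $(x,t)\mapsto V(t-x)$ is a $C^k$-function there; since $u(x,t)=e^{-\mu x}h(V(t-x))$ with $e^{-\mu x}$ and $h$ smooth, it follows that $u\in C^{k}\left(\overline\Pi_{T}\right)$, and as $k$ is arbitrary $u$ is smoothing in the sense of Definition~\ref{defn:smoothing}. The main obstacle — and the reason the $C^k$-region recedes as $k$ grows — is exactly the finite-delay structure of the equation for $V$: recovering $V^{(k)}$ at $s$ needs $V\in C^{k}$ on the entire window $[s-1,s]$, so each additional derivative costs one unit of time before the required regularity is available. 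This is the jump-like, dynamic character of the smoothing recorded in Definition~\ref{defn:smoothing}.
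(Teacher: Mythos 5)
Your proof is correct; I checked the reduction to the closed delay equation for $V$, the Leibniz/induction step, and the transfer back to $u$, and each step goes through (the only delicate points -- that the representation $u(x,t)=e^{-\mu x}h(V(t-x))$ requires $t>x$, and that differentiating $\int_0^1 K'(r)g^{(j)}(s-r)\,dr$ needs $g\in C^{j}$ on the whole window $[s-1,s]$, which costs one unit of time per derivative -- are handled explicitly). The underlying smoothing mechanism is the same as the paper's: the substitution $\sigma=s-y$ (the paper's $\tau=t-x-\xi$) converts the partial integral in \eqref{eq:3ex} into a time-convolution against the smooth kernel $\gamma(r)e^{-\mu r}$, and this convolution is what manufactures one extra derivative at each stage. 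But your organization is genuinely different and arguably cleaner. The paper never closes an equation for the fertility functional; instead it iterates operator equations for the successive time-derivatives of $u$ itself, namely $u=BRBu$ on $\overline\Pi_{T_1}$, then $v=B\partial_tRBv$ for $v=\partial_tu$ on $\overline\Pi_{T_2}$, then $w=B\partial_t^rRBw$, computing $\partial_t^rR$ by the chain rule at each step and re-running the change of variables each time. Your reduction to the single scalar delay Volterra equation $V(s)=\int_{s-1}^sK(s-\sigma)h(V(\sigma))\,d\sigma$ concentrates the entire bootstrap in one induction on a function of one variable, gives explicit thresholds $T_k=k+2$ (the paper's $T_r$ are only asserted to exist), and makes transparent why each derivative costs exactly one unit of time. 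What the paper's heavier operator formulation buys is uniformity with the proofs of Theorems~\ref{thm:classical} and \ref{thm:dissipative}: it is the version that survives when lower-order coupling terms $b_{jk}u_k$ are present and no closed scalar equation for a boundary functional is available. For the specific decoupled model \eqref{eq:1ex}--\eqref{eq:3ex}, your route is a legitimate and more elementary alternative.
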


\begin{proof}
It suffices to show the smoothing property starting
from large enough $t$. Therefore, we can use the notation:
\begin{eqnarray*}
(Ru)(t)&=&h\left(\int_0^1\ga(x)u(x,t)\, dx\right)\\
\om(\xi;x,t)&=&t+\xi-x\\
c(\xi,x,t)
=\tilde c(\xi,x,t)
&=&e^{\mu(\xi-x)}\\
(Bu)(x,t)=(\tilde Bu)(x,t)&=&e^{-\mu x}u(0,t-x),
\end{eqnarray*}
the latter two being introduced for all large enough $t$. 
Integration along the characteristic curves implies that  any continuous solution to (\ref{eq:1ex})--(\ref{eq:3ex}) satisfies
the operator equations
$u=BRu$ and $u=Bu$ 
and, hence, 
\begin{eqnarray}
u=BRBu\label{pop1}
\end{eqnarray}
whenever  $t>T_1$, where $T_1$ is chosen to be so large that the operator $BRB$ moves away from the initial 
boundary (the right-hand side of \reff{pop1} does not depend on $\vphi$). Since
\begin{eqnarray*}
(BRBu)(t)& =& e^{-\mu x}h\left(\int_0^1\ga(\xi)e^{-\mu\xi}u(0,t-x-\xi)\, d\xi\right)\\
& =& e^{-\mu x}h\left(\int_{t-x-1}^{t-x}\ga(t-x-\tau)e^{\mu(x-t+\tau)}u(0,\tau)\, d\tau\right),
\end{eqnarray*}
we obtain the $C^1_t$-smoothness of $BRBu$ and, hence, of $u$ on $\overline\Pi_{T_1}$.
The $C^1$-smoothness of $u$ on $\overline\Pi_{T_1}$ now follows  from (\ref{eq:1ex}).

Proceeding similarly to the proof of Theorem~\ref{thm:classical}, in the second step we consider
the following operator equation with respect to $v=\d_tu$, obtained after differentiation of (\ref{eq:1ex}) and (\ref{eq:3ex})
with respect to $t$ and integration along characteristic curves:
\begin{eqnarray}
v|_{\overline\Pi_{T_2}}=B\d_tRBv,\label{pop2}
\end{eqnarray}
where 
\begin{eqnarray*}
(\d_tRv)(t) =  h^\prime\left(\int_0^1\ga(x)u(x,t)\, dx\right)\int_0^1\ga(x)v(x,t)\, dx
\end{eqnarray*}
and $T_2>T_1$ is fixed to satisfy the property that the right-hand side of \reff{pop2} does not depend on $u$
and $v$ in $\overline\Pi_0\setminus\Pi_{T_1}$. 
It follows that
\begin{eqnarray*}
v|_{\overline\Pi_{T_2}}& =& e^{-\mu x}h^\prime\left(\int_0^1\ga(\xi)u(\xi,t-x)\, d\xi\right)
\int_0^1\ga(\xi)e^{-\mu\xi}v(0,t-x-\xi)\, d\xi\\
& =& e^{-\mu x}h^\prime\left(\int_0^1\ga(\xi)u(\xi,t-x)\, d\xi\right)
\int_{t-x-1}^{t-x}\ga(t-x-\tau)e^{\mu(x-t+\tau)}v(0,\tau)\, d\tau.
\end{eqnarray*}
To conclude that $v\in C_t^1\left(\overline\Pi_{T_2}\right)^n$, it remains to note that
$u$ under the first integral in the right-hand side  meets the $C_t^1$-regularity,
while the second integral gives us the desired smoothing property.

In general, given $T_{r}$ for $r\ge 2$, we choose $T_{r+1}>T_{r}$
by the argument as above and for $w=\d_t^ru$ have the equation
\begin{eqnarray}
w|_{\overline\Pi_{T_{r+1}}}=B\d_t^rRBw,\label{popr}
\end{eqnarray}
where
\begin{eqnarray*}
(\d_t^rRw)(t) &=&  h^\prime\left(\int_0^1\ga(x)u(x,t)\, dx\right)\int_0^1\ga(x)w(x,t)\, dx\\
&+&\frac{d^{r-1}}{dt^{r-1}}\left[h^\prime\left(\int_0^1\ga(x)u(x,t)\, dx\right)\right]\int_0^1\ga(x)\d_tu(x,t)\, dx\\
&+&\frac{d^{r-2}}{dt^{r-2}}\left[h^\prime\left(\int_0^1\ga(x)u(x,t)\, dx\right)\int_0^1\ga(x)\d_tu(x,t)\, dx\right].
\end{eqnarray*}
Substituting the latter into \reff{popr} and changing variables under the integral of $w$ similarly
to the first two steps, we get the desired 
smoothing property for $w$.
This completes the proof.
\end{proof}

\subsection{Dissipative boundary conditions and periodic problems}\label{sec:dis}

Now we switch to boundary conditions having dissipative nature  and fitting the smoothing property.
A large class of dissipative boundary  conditions
for hyperbolic PDEs is described in~\cite{coron}.

To give an idea of the smoothing effect in this case, consider the following 
specification of  (\ref{eq:1}):
\beq\label{eq:dis}
\begin{array}{ll}
u_j(0,t) = h_j(z(t)), & 1\le j\le m, \ \ 
\\
u_j(1,t) = h_j(z(t)), & m< j\le n, \ \  
\end{array}
\ee
with 
\beq\label{z}
z(t)=\left(u_1(1,t),\dots,u_{m}(1,t),u_{m+1}(0,t),\dots,u_{n}(0,t)\right)\nonumber.
\ee 
In the domain $\Pi_{-\infty}$ we address the problem (\ref{eq:1}), (\ref{eq:dis}) subjected to periodic boundary 
conditions
\beq\label{eq:per}
u(x,t+2\pi)=u(x,t).
\ee
The problems of this kind appear in laser dynamics and chemical kinetics (in Section~\ref{sec:fredh}
we investigate a traveling-wave model of kind (\ref{eq:1}), (\ref{eq:dis}), \reff{eq:per} from laser dynamics).
Within this section, using the standard notation for the (sub)spaces of continuous functions, we assume that
the functions have additional property of  $2\pi$-periodicity in $t$.
Write
$$
h_j^\prime(z)=\nabla_zh_j(z),
\quad h^\prime(z)=\bigl\{\d_kh_j(z)\bigr\}_{j,k=1}^n.
$$

\begin{thm}\label{thm:dissipative}
 Assume that $a_j$, $b_{jk}$, $f_j$, and $h_j$ are 
smooth functions in all their arguments and the conditions   
 (\ref{eq:L1})--\reff{cass} are fulfilled. 
Moreover, the functions $a_j$, $b_{jk}$, $f_j$ are supposed to be $2\pi$-periodic in $t$.
If
\begin{eqnarray}\label{contr2}
\exp \left\{\int_x^{x_j}
\left(\frac{b_{jj}}{a_{j}}-l\frac{\d_ta_j}{a_{j}^2}\right)(\eta,\om_j(\eta;x,t))\,d\eta\right\}
\sum_{k=1}^n\left|\d_kh_j^\prime(z)\right|<1
\end{eqnarray}
for all $j,k\le n$, $x\in[0,1]$, $t\in\R$, $z\in\R^n$, and $l=0,1,\dots,r$,
then any continuous solution to the problem (\ref{eq:1}), (\ref{eq:dis}), (\ref{eq:per}) 
belongs to $C^r(\Pi_{-\infty})$.
\end{thm}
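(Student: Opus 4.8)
The plan is to follow the inductive scheme of the proof of Theorem~\ref{thm:classical}, the one genuinely new point being that in the periodic strip $\Pi_{-\infty}$ there is no initial line to ``escape'' from, so the boundary part of the integral representation can no longer be frozen into fixed data and must instead be controlled through the dissipativity encoded in \reff{contr2}. First I would reduce the claim to showing that $\d_t^l u\in C(\overline\Pi_{-\infty})^n$ for every $l\le r$: once the top $t$-derivative is continuous, differentiating \reff{eq:1} alternately in $x$ and $t$ recovers every mixed derivative up to order $l$, so $u\in C^l$. Integrating \reff{eq:1} along the characteristics \reff{char} gives the operator equation $u=BRu+Du+Ff$, now with $S=R$ the nonlinear dissipative map \reff{eq:dis}.

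For the inductive step I would assume $\d_t^{l-1}u$ continuous, put $w=\d_t^l u$, and differentiate \reff{eq:1} and \reff{eq:dis} $l$ times in $t$, eliminating the $\d_x$-derivatives through \reff{eq:1} exactly as in the passage to \reff{e1}. Integrating the resulting transport equations along characteristics, I expect the equation
\[
w=\tilde B_l R^{(l)}w+\tilde Dw+g,
\]
in which $\tilde B_l,\tilde D$ are the operators \reff{B1}, \reff{D1} with the amplitude $\tilde c_j$ carrying the exponent $\int_x^\xi(b_{jj}/a_j-l\,\d_ta_j/a_j^2)$, the operator $R^{(l)}$ is the top-order linearization of \reff{eq:dis} (its leading coefficient is the boundary Jacobian $h^\prime(z)=\{\d_kh_j(z)\}$), the term $\tilde Dw$ is the off-diagonal coupling $\sum_{k\ne j}b_{jk}w_k$ at the highest order, and $g$ gathers all contributions built from $f,\d_tf,\dots,\d_t^l f$ and from $u,\dots,\d_t^{l-1}u$. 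By the induction hypothesis together with the $\tilde D^2$-smoothing established inside the proof of Theorem~\ref{thm:classical}, the remainder $g$ is continuous (indeed $C^1_t$).

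The decisive step is to solve this linear fixed-point equation for $w$ in $C(\overline\Pi_{-\infty})^n$. Unlike the classical case the right-hand side keeps a genuine dependence on $w$ through $\tilde B_l R^{(l)}$, because every characteristic in $\Pi_{-\infty}$ meets the lateral boundary infinitely often as $t\to-\infty$, so the solution is an infinite superposition of reflected boundary traces. My plan is to show that $\tilde B_l R^{(l)}+\tilde D$ has spectral radius strictly below one, whence $w$ is recovered by a uniformly convergent Neumann series and, as a sum of continuous functions, is itself continuous; by uniqueness it equals $\d_t^l u$, giving $u\in C^l$. Here \reff{contr2} is used exactly: the exponential displayed there is the amplitude $\tilde c_j(x_j,x,t)$ accumulated along one characteristic leg back to the boundary point $x_j$ for the $l$-times differentiated equation, while $\sum_k|\d_k h_j^\prime(z)|$ is the $\ell^1$-size of the $j$-th row of the boundary Jacobian; their product being $<1$ for every $l\le r$ says precisely that one reflection strictly contracts the supremum of the boundary trace, uniformly in $j,x,t,z$. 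Since \reff{eq:L2} forces a characteristic to traverse $[0,1]$ in a time bounded below, the number of reflections grows linearly as one runs backward in~$t$, so the iterated reflection amplitudes decay like $\rho^N$ with $\rho<1$.

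The step I expect to be the main obstacle is reconciling this geometric decay of the reflection operator with the integral operator $\tilde D$, which is bounded but not itself a contraction, so that one cannot merely add the two norms. I would instead estimate the powers $(\tilde B_l R^{(l)}+\tilde D)^N$ reflection-path by reflection-path, exploiting the Volterra character of $\tilde D$ along each leg (each of $x$-length at most one, with $b_{jk}$ tamed through \reff{cass}) so that the integral mass accumulated between two successive reflections stays under control while the reflection amplitude decays geometrically; equivalently, passing to a characteristic-adapted weighted supremum norm shrinks $\|\tilde D\|$ without spoiling \reff{contr2}. Carrying this estimate out uniformly in $l\le r$ is exactly what lets the induction climb from continuity of $u$ all the way to $u\in C^r(\Pi_{-\infty})$.
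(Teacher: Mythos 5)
Your skeleton (integral representation along characteristics, induction on the order of the $t$-derivative, recovery of the $x$- and mixed derivatives from the PDE, and the use of \reff{contr2} to control the reflected boundary traces in the strip) matches the paper's. But the step you yourself flag as the main obstacle is where the argument genuinely breaks, and the paper resolves it by a different device that you are missing. You keep the coupling operator $\tilde D$ at top order in the fixed-point equation $w=\tilde B_lR^{(l)}w+\tilde Dw+g$ and propose to show that $\tilde B_lR^{(l)}+\tilde D$ has spectral radius below one. That is not implied by the hypotheses: \reff{contr2} makes only the \emph{reflection} part contractive, while $\tilde D$ is merely bounded, and its Volterra structure is in the $x$-direction over a leg of fixed length, so there is no smallness to be gained per reflection; a characteristic-adapted exponential weight in $t$ that would shrink $\tilde D$ is not available in the $2\pi$-periodic setting of $\Pi_{-\infty}$. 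Worse, if the combined operator really were a contraction, you would have proved unique solvability of the problem, which is strictly stronger than the regularity statement and is exactly what the Fredholm analysis of Section~\ref{sec:fredh} shows one cannot expect in general.

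The paper's resolution is to never invert $D$ at all. Besides $u=BRu+Du+Ff$ (i.e.\ \reff{abstr} with $S=R$), the solution also satisfies \reff{abstr_more}, $u=Bu+Du+Ff$, with the unreflected trace; substituting the latter into the former yields \reff{final}, $u=BRu+(DB+D^2)u+(I+D)Ff$, in which $D$ occurs only in the combinations $DB$ and $D^2$. Both of these are \emph{smoothing} (they map continuous functions to $C^1_t$), by the same change of variables via \reff{cass} and integration by parts used for $D^2$ in the proof of Theorem~\ref{thm:classical}; this is the mechanism you invoke only for the lower-order remainder $g$, whereas it must be applied to the top-order coupling term itself. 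After that, the only operator to invert is $I-BR$ on $C^1_t\left(\overline\Pi_{-\infty}\right)^n$, for which the contraction condition \reff{contr2} (together with the Rauch--Reed regularity argument) suffices, and the induction proceeds with $\tilde B,\tilde D$ carrying the exponent modified by $l\,\d_ta_j/a_j^2$ exactly as you anticipated. Without this one-step iteration of the integral equation, your Neumann-series plan requires an additional smallness assumption on $b^1$ that the theorem does not make.
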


\begin{proof}
Any continuous solution to the problem (\ref{eq:1}), (\ref{eq:dis}), (\ref{eq:per})  in $\Pi_{-\infty}$
fulfills \reff{abstr} with $S=R$ and also satisfies the equation 
\beq\label{abstr_more}
u=Bu+Du+Ff
\ee
where the boundary conditions are not specified. Substituting \reff{abstr_more} into \reff{abstr},
we obtain 
\beq\label{final}
u=BRu+(DB+D^2)u+(I+D)Ff.
\ee
We first show the bijectivity of $I-BR\in\LL\left(C^1_{t}\left(\overline\Pi_{-\infty}\right)^n\right)$.
On the account of \reff{dt} and the definition of $B$  given by \reff{B}, we have
\begin{eqnarray*}
\lefteqn{
(BRu)_j(x,t)=c_j(x_j,x,t)h_j\left(z(\om_j(x_j;x,t))\right)= c_j(x_j,x,t)h_j(0)}\\
&&+\exp\left\{ \int_x^{x_j}
\left(\frac{b_{jj}}{a_{j}}\right)(\eta,\om_j(\eta;x,t))\,d\eta\right\}
\int_0^1h_j^\prime\left(\al z(\om_j(x_j;x,t))\right)\,d\al\cdot z(\om_j(x_j;x,t))
\end{eqnarray*}
and 
\begin{eqnarray*}
\lefteqn{
\d_t\left[(BRu)_j(x,t)\right]=\d_tc_j(x_j,x,t)h_j\left(z(\om_j(x_j;x,t))\right)}\\
&&+h_j^\prime\left(z(\om_j(x_j;x,t))\right)\cdot z^\prime(\om_j(x_j;x,t))
\exp \left\{\int_x^{x_j}
\left(\frac{b_{jj}}{a_{j}}-\frac{\d_ta_j}{a_{j}^2}\right)(\eta,\om_j(\eta;x,t))\,d\eta\right\},
\end{eqnarray*}
where $\cdot$ denotes the scalar product in $\R^n$. Taking into account \reff{z}, 
the bijectivity of $I-BR\in\LL\left(C^1_{t}\left(\overline\Pi_{-\infty}\right)^n\right)$
now follows from the contractibility condition \reff{contr2} with $r=0,1$
and from the proof of the $C^k$-regularity result for solutions of 
first-order hyperbolic PDEs given in~\cite{RauchReed81}.

Now we claim that the operators  $DB$ and $D^2$ in \reff{final} are smoothing.
The latter is smoothing by the proof in Theorem~\ref{thm:classical}. Similar
argument works also for $DB$. 
 Indeed, by the definition of the operators $D$
and $B$ we have 
\begin{eqnarray}
\lefteqn{
\left(DBu^l\right)_j(x,t)}\label{DB}\\
&&=\sum_{k=1\atop k\not=j}^n
\int^{x_j}_x  d_j(\xi,x,t)b_{jk}(\xi,\om_j(\xi;x,t))c_k(x_k,\xi,\om_j(\xi;x,t))u_k^l(x_k,\om_k(x_k;\xi,\om_j(\xi;x,t))) d \xi,
\nonumber
\end{eqnarray}
where the sequence $u^l$ is fixed to satisfy \reff{eq:lim_0} with $\Pi_0$ replaced by $\Pi_{-\infty}$.
To show that $\d_t\left[DBu^l\right]$ converges uniformly on $\overline\Pi_{-\infty}$, we transform the integrals 
in \reff{DB} like to  the case of $D^2$, that is, we differentiate \reff{DB} in $t$, use \reff{cass}, and integrate by parts.
In this way we get the smoothing property for $DB$. Turning back to the formula \reff{final} and
using in addition the fact that  $(I+D)Ff$ is $C^\infty$-smooth, we can 
rewrite \reff{final} in the  equivalent form
\beq\label{ioioio}
u=(I-BR)^{-1}\left[(DB+D^2)u+(I+D)Ff\right],\nonumber
\ee
thereby reaching the $C^1_t$-regularity for $u$.
Afterwards, the $C^1$-regularity of $u$ is a straightforward consequence of the 
system \reff{eq:1}. 

Proceeding similarly to the proof of Theorem~\ref{thm:classical}, we come to the formula for
$v=\d_tu$:
\beq
v=(I-\tilde BR_z^\prime)^{-1}\left[(\tilde D\tilde B+\tilde D^2)v+(I+\tilde D)\tilde FG(f,\d_tf,u)\right],\nonumber
\ee
where $R_z^\prime y=h^\prime(z)y$.  The property that $v\in C^1_t\left(\overline\Pi_{-\infty}\right)^n$
 follows from  the bijectivity of 
$I-BR_z^\prime\in\LL\left(C^1_{t}\left(\overline\Pi_{-\infty}\right)^n\right)$,
which we have by condition \reff{contr2} with $r=1,2$
and the $C_t^1$-regularity of $\tilde DB+\tilde D^2$ and $(I+\tilde D)\tilde FG(f,\d_tf,u)$.
This entails $u\in C^2_t\left(\overline\Pi_{-\infty}\right)^n$. It follows by \reff{eq:1} that
$u\in C^2\left(\overline\Pi_{-\infty}\right)^n$.

To complete the proof, we  proceed by induction on the order of regularity of~$u$. Assume that 
$u\in C^r\left(\overline\Pi_{-\infty}\right)^n$ for some $r\ge 2$ and prove that
$u\in C^{r+1}\left(\overline\Pi_{-\infty}\right)^n$.  
Our starting formula for $w=\d_t^ru$ is as follows:
\begin{eqnarray}
w&=&(I-\tilde BR_z^\prime)^{-1}\Bigl[(\tilde D\tilde B+\tilde D^2)w
\nonumber\\
&+&(I+\tilde D)\tilde F\tilde G(f,\d_tf,\dots,\d_t^rf,u,\d_tu,\dots,\d_t^{r-1}u)\nonumber\\
&+&
\tilde B\d_t^{r-1}R_z^\prime z^\prime+
\tilde B\d_t^{r-2}\left(R_z^\prime z^\prime\right)\Bigr],\nonumber
\end{eqnarray}
where 
$
\d_t^{r-1}R_z^\prime =\bigl\{\d_t^{r-1}(\d_kh_j(z))\bigr\}_{j,k=1}^n
$
 and $\tilde B,\tilde D$, and $\tilde F$ are modified by $\tilde c_j(\xi,x,t)$ in \reff{B1}--\reff{F1}
changing  to $\tilde c_j(\xi,x,t)=\exp \int_x^\xi
\left(\frac{b_{jj}}{a_{j}}-r\frac{\d_ta_j}{a_{j}^2}\right)(\eta,\om_j(\eta;x,t))\,d\eta$.
By the regularity assumptions on the data and the induction assumption, 
the last three summands in the square brackets are $C^1_t$-functions.
Using in addition our smoothing argument for $\tilde D\tilde B+\tilde D^2$ and the regularity properties of $(I-\tilde BR_z^\prime)^{-1}$,
we arrive at the desired conclusion.
\end{proof}

\section{Fredholm solvability of periodic problems}\label{sec:fredh}

In \cite{KmRe1,KR2} we suggest an approach to establish the Fredholm property for
first-order hyperbolic operators. This is done by construction an equivalent
regularization in the form  of a parametrix. The construction is, implicitly but
essentially, based on the smoothing effect investigated in Section~\ref{sec:smooth}. 
Consider the first-order one-dimensional hyperbolic system 
\beq\label{eq:1.1}
(\partial_t  + a(x)\partial_x + b(x))u = f(x,t), \;
  x\in(0,1),
\ee
subjected to periodic conditions \reff{eq:per}
and reflection boundary conditions
\beq\label{eq:1.3}
\begin{array}{l}
\displaystyle
u_j(0,t) = \sum\limits_{k=m+1}^nr_{jk}^0u_k(0,t),\; 1\le j\le m,\\
\displaystyle
u_j(1,t) = \sum\limits_{k=1}^mr_{jk}^1u_k(1,t), \; m<j\le n.\\
\end{array}
\ee
Here $r_{jk}^0$ and $r_{jk}^1$ are real numbers
and the right-hand sides $f_j:[0,1] \times \R \to \R$
are supposed to be $2\pi$-periodic with respect to $t$.

The main result of this section states that the system (\ref{eq:1.1}), \reff{eq:per}, (\ref{eq:1.3}) is solvable if and only if 
the right hand side is orthogonal to all solutions 
to the corresponding homogeneous adjoint system
$$
-\partial_tu  - \partial_x\left(a(x)u\right) + b^T(x)u = 0,
\;  x\in(0,1),  
$$
subjected to periodic conditions \reff{eq:per} and adjoint boundary conditions
\beq \label{eq:1.6}
\begin{array}{l}
\displaystyle
a_j(0)u_j(0,t) = -\sum\limits_{k=1}^mr_{kj}^0a_k(0)u_k(0,t),\; m< j\le n,\\
\displaystyle
a_j(1)u_j(1,t) = -\sum\limits_{k=m+1}^nr_{kj}^1a_k(1)u_k(1,t),\; 1\le j\le m.
\end{array}
\end{equation}

We will  present our result in three steps. First we introduce appropriate function spaces 
for solutions. Then we decompose  the operator of the problem into two parts, only one
being responsible for propagation of singularities. Finally, based on this decomposition 
and the smoothing property, we construct a  parametrix thereby establishing the Fredholm solvability.

When choosing the function spaces, note that the
problem (\ref{eq:1.1}), \reff{eq:per}, (\ref{eq:1.3}) 
describes the so-called traveling-wave models from laser dynamics~\cite{LiRadRe,RadWu}. 
From the physical point of view, it is desirable to allow
discontinuities in the coefficients and the right hand side of \reff{eq:1.1}.
This entails that the spaces of solutions  should not be  too small.
On the other hand, they should not be too large, in order to admit embeddings
into an  algebra of functions with pointwise multiplication of its elements.
The last property is important for potential applicability of our results to 
nonlinear problems, like describing such dynamic phenomena 
as Hopf bifurcation and periodic synchronizations. Finally, the solution spaces 
capable to capture the Fredholm solvability need to have optimal regularity with respect
to the function spaces of the right-hand side.

We now  describe the scale of spaces  $V^\ga$ (for the solutions) and $W^\ga$ (for the right-hand side)
meeting all these properties. 
For $\ga\ge 0$, let $W^{\ga}$ denote the vector space of all locally integrable functions
$f: [0,1]\times\R\to\R^n$ such that
$f(x,t)=f\left(x,t+2\pi\right)$ for almost all $x \in (0,1)$ and $t\in\R$
and that
\begin{equation}\label{eq:1.12}
\|f\|_{W^{\ga}}^2=\sum\limits_{s\in\Z}(1+s^2)^{\gamma}
\int\limits_0^1\left\|\int\limits_0^{2\pi}
f(x,t)e^{-ist}\,dt\right\|^2\,dx<\infty.
\end{equation}
Here and in what follows $\|\cdot\|$ is the Hermitian norm in $\C^n$. 
It is well known that $W^{\ga}$ is a Banach space 
with the norm~(\ref{eq:1.12}); see, e.g.~\cite{herrmann}, \cite[Chapter 5.10]{robinson}, 
and \cite[Chapter 2.4]{vejvoda}.

Furthermore,  for $\ga\ge 1$  and $a \in L^\infty\left((0,1);\M_n\right)$,
where $\M_n$ denotes the space of real $n\times n$ matrices,
 with $\mbox{ess inf } |a_j|>0$ 
for all $j\le n$
we will work with the function spaces
$$
U^{\gamma} = \Bigl\{u\in W^{\gamma}:\, \d_xu\in W^{0},\,
\d_tu+a\d_xu\in W^{\gamma}\Bigr\} 
$$
endowed with the norms
$$
\|u\|_{U^{\gamma}}^2=\|u\|_{W^{\gamma}}^2
+\left\|\d_tu+a\d_xu\right\|_{W^{\gamma}}^2.
$$
Remark that the space $U^{\gamma}$ depends on $a$ and is larger than the space of  all $u \in W^\ga$ such that
$\partial_t u \in  W^\ga$ and $\partial_x u \in  W^\ga$ (which does not  depend on $a$).
For $u \in U^\ga$ there exist traces $u(0,\cdot), u(1,\cdot) \in L^2_{loc}(\R;\R^n)$ (see~\cite{KR2}), and,  hence, it makes sense
to consider the closed subspaces in $U^\ga$
\begin{eqnarray*}
V^\ga&=&\{u \in U^\ga:\, (\ref{eq:1.3}) \mbox{ is fulfilled}\},\\
\tilde{V}^\ga&=&\{u \in U^\ga:\, (\ref{eq:1.6}) \mbox{ is fulfilled}\}.
\end{eqnarray*}

Our next task is to decompose the operator of our problem into two parts in order
to single out the part, denoted below by $\A$, which is bijective and at the same time
is responsible for the propagation of singularities. If this decomposition  is optimal, then 
 after a  regularization procedure the other part becomes smoothing and 
therefore meets the compactness property. 
Let
$$
b^0=\mbox{diag}(b_{11},b_{22},\ldots,b_{nn}) \; \mbox{ and } \; b^1=b-b^0
$$
denote the diagonal and the off-diagonal parts of the coefficient matrix $b$, respectively.
Let us introduce operators $\A\in\LL(V^{\gamma};W^{\gamma})$,
$\tilde{\A}\in\LL(\tilde{V}^{\gamma};W^{\gamma})$,
and $\B,\tilde{\B}\in\LL(W^{\gamma})$ by
\begin{eqnarray*} 
\begin{array}{rcl}
\A u&=&\d_tu+a\d_xu+b^0u,\nonumber\\
\tilde\A u&=&-\d_tu-\d_x(au)+b^0u,\nonumber\\
\B u&=&b^1u,\nonumber\\
\tilde\B u&=&(b^1)^Tu.\nonumber
\end{array}
\end{eqnarray*}
Remark that the operators  $\A$, $\B$, and $\tilde{\B}$ are well-defined for $a_j, b_{jk} \in  L^\infty(0,1)$, while 
$\tilde{\A}$ is well-defined under additional regularity assumptions with respect to the coefficients
$a_j$, for example, 
for $a_j \in C^{0,1}([0,1])$.
Note that the operator equation 
\begin{equation}
\label{abstr1}
\A u+\B u=f\nonumber
\end{equation} 
is an abstract representation of the
periodic-Dirichlet problem~(\ref{eq:1.1}), \reff{eq:per}, (\ref{eq:1.3}).

Finally,  for $s \in \Z$
we introduce
the complex $(n-m)\times(n-m)$ matrices 
\begin{equation}
R_s=
\left[\sum\limits_{l=1}^m
e^{is(\al_j(1)-\al_l(1))+\be_j(1)-\be_l(1)}r_{jl}^1r_{lk}^0\right]_{j,k=m+1}^n,\nonumber
\end{equation}
where
\begin{equation}\label{coef}
\al_j(x)=\int_0^x\frac{1}{a_j(y)}\,dy, \;\;\be_j(x)=\int_0^x\nonumber
\frac{b_{jj}(y)}{a_j(y)}\,dy.
\end{equation}

The following theorem states, first, that the pair of spaces $(V^\ga,W^\ga)$ gives an
optimal regularity trade-off between the spaces of solutions and right-hand sides and,
second, that $\A$ meets the bijectivity property. The second desirable property for 
$\A$ of being an optimal operator responsible for propagation of singularities will be a consequence of
our Fredholmness result.

\begin{thm}\cite{KR2}\label{thm:isom} 
For every $c>0$ there exists $C>0$ such that the following is true:
If 
\begin{equation}\label{ge}
a_j, b_{jj} \in  L^\infty(0,1)\; \mbox{ and } \; \ess\inf|a_j|\ge c \; \mbox{ for all } j=1,\ldots,n,
\end{equation}
\begin{equation}\label{le}
\sum_{j=1}^n\|b_{jj}\|_\infty
+ \sum_{j=1}^m\sum_{k=m+1}^n |r^0_{jk}|+\sum_{j=m+1}^n\sum_{k=1}^m |r^1_{jk}|\le \frac{1}{c},\nonumber
\end{equation}
and
\beq
\label{cond}
|\det (I-R_s)|\ge c \; \mbox{ for all } \; s \in \Z,
\ee
then for all $\ga \ge 1$
the operator $\A$ is an isomorphism from $V^{\gamma}$ onto $W^{\gamma}$ and
$$
\|\A^{-1}\|_{{\mathcal L}(W^{\gamma};V^{\gamma})}\le C.
$$
\end{thm}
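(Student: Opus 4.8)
The plan is to diagonalize the problem in the time variable and reduce it to a family of elementary boundary value problems for ordinary differential equations in $x$, one for each Fourier mode. Writing $u(x,t)=\sum_{s\in\Z}u_s(x)e^{ist}$ and $f(x,t)=\sum_{s\in\Z}f_s(x)e^{ist}$, the equation $\A u=f$ becomes, for every $s\in\Z$, the system $is\,u_s+a\,u_s'+b^0u_s=f_s$ on $(0,1)$. Since $a=\diag(a_1,\dots,a_n)$ and $b^0=\diag(b_{11},\dots,b_{nn})$ are diagonal, this decouples into $n$ scalar linear ODEs $a_j(u_s)_j'+(is+b_{jj})(u_s)_j=(f_s)_j$, each of which I would solve explicitly by the integrating factor $e^{is\al_j(x)+\be_j(x)}$, with $\al_j,\be_j$ as in \reff{coef}. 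This yields $(u_s)_j(x)$ as the sum of a boundary term $e^{-is\al_j(x)-\be_j(x)}(u_s)_j(0)$ and a forcing integral whose kernel has modulus $e^{\be_j(y)-\be_j(x)}/|a_j(y)|$; here $\ess\inf|a_j|\ge c$ and the bound on $\|b_{jj}\|_\infty$ from \reff{le} keep the kernel and the factors $e^{\be_j}$ bounded in terms of $c$ alone.

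Next I would encode the boundary conditions \reff{eq:1.3} as a finite linear system for the unknown boundary values. Following the characteristic cycle --- apply the reflection at $x=0$, propagate the components $j\le m$ from $0$ to $1$, apply the reflection at $x=1$, and propagate the components $j>m$ back from $1$ to $0$ --- the outflow values $\xi_s=\bigl((u_s)_k(0)\bigr)_{k=m+1}^n$ are seen to satisfy $(I-R_s)\xi_s=g_s$, where $R_s$ is exactly the matrix introduced above and $g_s$ collects the forcing integrals accumulated along the cycle, so that $\|g_s\|\le C_2\|f_s\|_{L^2(0,1)}$ with $C_2=C_2(c)$. The hypotheses combine here: \reff{ge} and \reff{le} bound the entries of $R_s$ uniformly in $s$ (the oscillatory factors $e^{is(\al_j(1)-\al_l(1))}$ have modulus one and drop out of the bound), while \reff{cond} gives $|\det(I-R_s)|\ge c$; via Cramer's rule these two facts yield a bound $\|(I-R_s)^{-1}\|\le C_1(c)$ uniform in $s$. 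Solving for $\xi_s$ and substituting back reconstructs $u_s$ uniquely, so $\A$ is bijective mode by mode.

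The quantitative estimate then follows from a single mode-wise bound. Since the reconstruction of $u_s$ from $f_s$ is composed of the uniformly bounded operator $(I-R_s)^{-1}$, the uniformly bounded forcing-to-boundary map, and the forcing integral operator with uniformly bounded kernel --- all of whose $s$-dependence sits only in unimodular phases --- I would obtain $\|u_s\|_{L^2(0,1)}\le C'\|f_s\|_{L^2(0,1)}$ with $C'=C'(c)$ independent of both $s$ and $\gamma$. Multiplying by the weight $(1+s^2)^\gamma$ and summing gives $\|u\|_{W^\gamma}\le C'\|f\|_{W^\gamma}$ for every $\gamma$ with the same constant; this $\gamma$-uniformity is the whole point and is forced by the fact that the weight appears identically on both sides of \reff{eq:1.12}. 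To upgrade this to the $U^\gamma$ bound I would use $\d_tu+a\d_xu=f-b^0u$, whence $\|\d_tu+a\d_xu\|_{W^\gamma}\le(1+\|b^0\|_\infty C')\|f\|_{W^\gamma}$ with $\|b^0\|_\infty\le 1/c$ by \reff{le}; the membership $\d_xu\in W^0$ follows from $(u_s)_j'=(f_s)_j/a_j-(is+b_{jj})(u_s)_j/a_j$, which costs one factor of $s$ and is therefore controlled precisely when $\gamma\ge 1$.

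The main obstacle is the uniform-in-$s$ invertibility of $I-R_s$: it is here that all three hypotheses \reff{ge}, \reff{le}, \reff{cond} must be made to cooperate, and one must be careful that the determinant lower bound \emph{together with} the entrywise bounds (rather than the determinant bound alone) produces a uniform bound on the inverse. A secondary technical point, which I would settle by invoking the trace theorem for $U^\gamma$ cited from~\cite{KR2}, is to confirm that the mode-by-mode construction assembles into a genuine element of $U^\gamma$ whose traces satisfy \reff{eq:1.3}, so that $u\in V^\gamma$ and $\A^{-1}$ is the claimed bounded inverse with a constant $C$ depending only on $c$.
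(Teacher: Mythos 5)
Your proposal is correct and follows exactly the route that the cited source~\cite{KR2} takes (the paper itself states Theorem~\ref{thm:isom} without proof): Fourier decomposition in $t$ as dictated by the definition~\reff{eq:1.12} of $W^\gamma$, explicit integration of the resulting decoupled scalar ODEs with integrating factor $e^{is\al_j+\be_j}$, reduction of the reflection conditions to $(I-R_s)\xi_s=g_s$ with the uniform invertibility of $I-R_s$ obtained from \reff{cond} plus the entrywise bounds via Cramer's rule, and the observation that $\ga\ge 1$ is exactly what is needed to control $\d_xu\in W^0$. Your mode-wise constant is independent of $s$ and $\ga$, which is precisely how the $\ga$-uniform bound on $\|\A^{-1}\|$ arises, so no gaps here.
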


Let
\beq \label{eq:1.16}
\langle f,u\rangle_{L^2}=\frac{1}{2\pi}\int_0^{2\pi}\int_0^{1}
\left\langle f(x,t),u(x,t)\right\rangle\,dxdt\nonumber
\ee
denote the scalar product in the Hilbert space $L^2\left((0,1)\times(0,2\pi);\R^n\right)$
and $\langle \cdot,\cdot\rangle$ denote  the Euclidean scalar product in $\R^n$.
As usual, by $BV(0,1)$ we denote the Banach space of all functions $h:(0,1) \to \R$ with bounded variation,
i.e. of
all $h \in L^\infty(0,1)$ such that there exists $C>0$ with
\beq
\label{BV}
\left|\int_0^1 h(x)\vphi'(x) dx \right| \le C \|\vphi\|_{L^\infty(0,
1)} \mbox{ for all }
\vphi \in C_0^\infty(0,1).
\ee
The norm of $h$ in  $BV(0,1)$ is the sum of the norm of  $h$ in  $L^\infty(0,1)$ and of the smallest
possible constant $C$ in (\ref{BV}).
We are prepared to formulate the main result of this section.

\begin{thm}\cite{KR2}\label{thm:fredh} Suppose that conditions \reff{ge}  and \reff{cond}  are fulfilled for some $c>0$.
Suppose also that
\beq \label{eq:1.10}
\begin{array}{l}
\mbox{for all } j\ne k \mbox{ there is } p_{jk}\in BV(0,1) \mbox{ such that }\\
\displaystyle a_k(x)b_{jk}(x)a = p_{jk}(x)(a_j(x)-a_k(x)) \mbox{ for a.a. } x \in [0,1].
\end{array}
\ee
Then the following is true:

(i) The operator $\A+\B$ is a Fredholm operator with index zero
from $V^{\gamma}$ into $W^{\gamma}$ for all $\ga\ge 1$, and 
$
\ker({\A}+{\B})=\left\{u \in V^{\gamma}:\,\left(\A+\B\right)u=0\right\}
$
does not depend on~$\ga$.

(ii) {\rm (smoothing effect)}  If $a \in C^{0,1}\left([0,1];\M_n\right)$, then 
$\ker({\A}+{\B})^*=\ker({\tilde{\A}}+{\tilde{\B}})$ and 
\begin{eqnarray*}
\left\{(\A+\B)u: u \in V^{\gamma}\right\}
=\left\{f\in W^{\ga}:\langle f,u\rangle_{L^2}=0
\mbox{ for all }u\in\ker(\tilde\A+\tilde \B)\right\},
\end{eqnarray*}
where
$
\ker({\tilde{\A}}+{\tilde{\B}})=\{u \in \tilde{V}^{\gamma}:\,
(\tilde{\A}+\tilde{\B})u=0\}
$
does not depend on $\ga$.
\end{thm}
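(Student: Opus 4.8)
The plan is to build on the bijectivity of the diagonal operator $\A$. Conditions \reff{ge} and \reff{cond} already force $\A\in\LL(V^\ga;W^\ga)$ to be bijective for every $\ga\ge1$ (this is the qualitative content of Theorem~\ref{thm:isom}: solving \reff{eq:1.1}, \reff{eq:per}, \reff{eq:1.3} mode-by-mode in $t$ reduces invertibility of $\A$ to that of the matrices $I-R_s$, guaranteed uniformly by \reff{cond}, whereupon boundedness of $\A^{-1}$ is automatic by the open mapping theorem; the smallness assumption in Theorem~\ref{thm:isom} serves only to make $\|\A^{-1}\|$ depend on $c$ alone). I would then factor
\[
\A+\B=\A\bigl(I+\A^{-1}\B\bigr),
\]
reducing the Fredholm analysis of $\A+\B$ to that of $I+\A^{-1}\B\in\LL(V^\ga)$, the index of $\A+\B$ following from multiplicativity together with $\ind\A=0$. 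The regularization promised in the text I would realize as the truncated Neumann series
\[
\mathcal P=\Bigl(\sum_{k=0}^{N-1}(-\A^{-1}\B)^k\Bigr)\A^{-1}\in\LL(W^\ga;V^\ga),
\]
which is a genuine parametrix as soon as a single iterate $(\A^{-1}\B)^N$ is compact: indeed $\mathcal P(\A+\B)=I-(-\A^{-1}\B)^N$ and $(\A+\B)\mathcal P=I-\A(-\A^{-1}\B)^N\A^{-1}$ are then compact perturbations of the identity.

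The heart of the matter, and the step I expect to be the main obstacle, is to prove this compactness, with $N=2$ expected in analogy with the role of $D^2$ in the proof of Theorem~\ref{thm:classical}. The key point is that the composition $\B\A^{-1}\B$, which a priori only maps $V^\ga$ into $W^\ga$, in fact gains one degree of regularity and maps $V^\ga$ into $W^{\ga+1}$; granting this, Theorem~\ref{thm:isom} applied at level $\ga+1$ yields $(\A^{-1}\B)^2=\A^{-1}\bigl(\B\A^{-1}\B\bigr):V^\ga\to V^{\ga+1}$, and the embedding $V^{\ga+1}\hookrightarrow V^\ga$ is compact by a Rellich-type argument combined with interpolation, using that membership in $U^{\ga+1}$ controls $\d_tu$ and $\d_xu$ simultaneously in $W^0$. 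The regularity gain of $\B\A^{-1}\B$ is exactly the smoothing mechanism of Section~\ref{sec:smooth}: representing $\A^{-1}$ by integration along the diagonal characteristics, the kernel of $\B\A^{-1}\B$ carries two off-diagonal factors $b_{jk}$, and the Levy-type condition \reff{eq:1.10} permits one to trade, after differentiation, the transverse derivative that falls on the unknown for a derivative along a characteristic, which is then removed by an integration by parts and transferred onto the smooth data. The principal difficulty is that here the coefficients are merely $L^\infty$ with $p_{jk}\in BV(0,1)$, so this computation cannot be performed pointwise along characteristics as in Section~\ref{sec:smooth} but must be recast weakly in the Fourier-coefficient definition \reff{eq:1.12} of the $W^\ga$-norm.

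Once $(\A^{-1}\B)^2$ is known to be compact, $\A^{-1}\B$ is a Riesz operator and $I+\A^{-1}\B$ is Fredholm; its index is zero by homotopy invariance, connecting it to $I$ through the affine family $I+t\A^{-1}\B$, $t\in[0,1]$, each member of which is Fredholm because $(t\A^{-1}\B)^2$ is compact. Multiplicativity then gives $\ind(\A+\B)=\ind\A+\ind(I+\A^{-1}\B)=0$, proving (i). For the $\ga$-independence of $\ker(\A+\B)$ I would bootstrap: any $u$ with $(\A+\B)u=0$ satisfies $u=-\A^{-1}\B u$, hence $u=(\A^{-1}\B)^2u$; since $(\A^{-1}\B)^2$ raises regularity by one unit, iterating gives $u\in\bigcap_{\ga\ge1}V^\ga$, and as $V^{\ga'}\subset V^\ga$ for $\ga'\ge\ga$ the kernels computed in the various spaces coincide.

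For part (ii) the hypothesis $a\in C^{0,1}$ makes $\tilde\A$ well-defined, and I would first identify $\tilde\A+\tilde\B$ as the adjoint of $\A+\B$ with respect to the pairing $\langle\cdot,\cdot\rangle_{L^2}$. Integrating by parts in $x$ and $t$ gives, for $u\in V^\ga$ and $v\in\tilde V^\ga$,
\[
\langle(\A+\B)u,v\rangle_{L^2}-\langle u,(\tilde\A+\tilde\B)v\rangle_{L^2}
=\Bigl[\textstyle\sum_{j}a_j u_j v_j\Bigr]_{x=0}^{x=1},
\]
and the reflection conditions \reff{eq:1.3} together with the adjoint conditions \reff{eq:1.6} --- which carry precisely the weights $a_j$ and the transposed coefficients $r_{kj}$ --- are arranged so that this boundary term cancels, while the periodicity \reff{eq:per} removes the contribution from the $t$-boundary; this gives $\ker(\A+\B)^*=\ker(\tilde\A+\tilde\B)$. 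Since $\A+\B$ is Fredholm with closed range, its range is the annihilator of $\ker(\A+\B)^*$ in the $L^2$-pairing, and the smoothing effect applied to the adjoint problem shows that every $v\in\ker(\tilde\A+\tilde\B)$ is smooth and $\ga$-independent (again by bootstrapping). Hence the annihilator is exactly the set of $f\in W^\ga$ with $\langle f,v\rangle_{L^2}=0$ for all such $v$, which is the asserted orthogonality characterization.
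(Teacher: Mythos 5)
Your treatment of part (i) follows essentially the same route as the paper: factor out the bijective diagonal operator $\A$ (the paper writes $\A+\B=(I+\B\A^{-1})\A$ and works with $\D=\B\A^{-1}$ on $W^\ga$, you work with $\A^{-1}\B$ on $V^\ga$ --- a mirror image of the same decomposition), prove that the square of the remainder is compact via the Levy-type condition \reff{eq:1.10} and an integration by parts along characteristics (the paper's $\D^2$ is ``basically the same'' as the operator $D^2$ from the proof of Theorem~\ref{thm:classical}), and conclude Fredholmness from the criterion of Lemma~\ref{lem:criter} with the parametrix $I-\D$; your truncated Neumann series with $N=2$ is exactly the paper's right parametrix $\A^{-1}(I-\B\A^{-1})$. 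Your observation that the smallness condition of Theorem~\ref{thm:isom} is not an extra hypothesis, because for a single fixed problem it can always be arranged by shrinking $c$, is also correct. Like the paper's own outline, you leave the compactness estimate as the acknowledged technical core, so on part (i) there is nothing to object to.

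Part (ii), however, contains a genuine gap. The identity you obtain by integration by parts only shows that every $v\in\tilde V^\ga$ with $(\tilde\A+\tilde\B)v=0$ defines an element of $\ker(\A+\B)^*$, i.e.\ the inclusion $\ker(\tilde\A+\tilde\B)\subseteq\ker(\A+\B)^*$. The substantive content of (ii) is the converse: an element of $\ker(\A+\B)^*$ is a priori only a continuous functional on $W^\ga$, hence for $\ga\ge1$ a distribution of negative order in $t$ with no trace regularity at $x=0$ and $x=1$, and one must show that it is in fact represented by a function in $\tilde V^\ga$ satisfying the adjoint boundary conditions \reff{eq:1.6}. This regularity lifting from functionals to functions is itself the ``smoothing effect'' named in (ii); it cannot be obtained by the bootstrap of Section~\ref{sec:smooth}, which starts from solutions already known to be continuous, and the paper states explicitly that its proof in \cite{KmRe1,KR2} uses completely different, functional-analytic techniques. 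Your phrase ``the smoothing effect applied to the adjoint problem shows that every $v\in\ker(\tilde\A+\tilde\B)$ is smooth'' addresses only the regularity of elements already known to lie in $\tilde V^\ga$, not the identification of the abstract adjoint kernel with that set; the closed-range and annihilator argument that follows is fine, but only once that identification has been established.
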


 Theorem~\ref{thm:fredh} (ii) states that the kernel of the adjoint operator is 
actually defined on the classical function spaces. In other words, 
the kernel has much better regularity than ensured just by the formal definition of the adjoint operator.
Here we encounter a smoothing effect for the solutions (of the adjoint hyperbolic problem), 
that are originally functionals. The proof of  this effect in~\cite{KmRe1,KR2} uses completely 
different techniques, based on a functional-analytic approach.

Finally, we  outline the proof of Theorem~\ref{thm:fredh} (i). As mentioned above, we
construct a parametrix to the operator of the problem. By Theorem~\ref{thm:isom},
the zero-order Fredholmness of the operator $\A+\B\in\LL(V^\ga;W^\ga)$ is equivalent to 
the zero-order Fredholmness of the operator $I+\B\A^{-1}\in\LL(W^\ga)$. Furthermore,
we use the following Fredholmness criterion (see also~\cite[Theorem 5.5]{Schechter}
or \cite[Proposition 5.7.1]{Zeidler}).

\begin{lem}\cite{KmRe1}\label{lem:criter}
Let $I$ denote the identity in a Banach space $W$. Suppose that $\D\in\LL(W)$ and $\D^2$
is compact.  Then $I+\D$ is Fredholm.
\end{lem}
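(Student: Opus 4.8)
The plan is to reduce the claim to the classical Riesz--Schauder theory of compact perturbations of the identity. Write $A=I+\D$ and $B=I-\D$. Since $A$ and $B$ are both polynomials in $\D$, they commute, and
\[
AB=BA=I-\D^2=:G.
\]
By hypothesis $\D^2$ is compact, so $G=I-\D^2$ is a compact perturbation of the identity and therefore Fredholm (of index zero) by Riesz--Schauder. In particular $\ker G$ is finite-dimensional and $\im G$ is closed with finite codimension in $W$; these two facts are all I will use about $G$.

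Next I would transfer the finiteness properties of $G$ to $A=I+\D$ via the two factorizations. For the kernel: if $Ax=0$ then $Gx=BAx=0$, so $\ker A\subseteq\ker G$ and hence $\dim\ker A<\infty$. For the range: since $G=AB$, every element $Gx=A(Bx)$ lies in $\im A$, whence $\im G\subseteq\im A$; as $\im G$ has finite codimension, so does $\im A$.

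The one point requiring care --- and the step I expect to be the only real obstacle --- is the \emph{closedness} of $\im A$, since a subspace of finite codimension need not be closed in general. Here I would exploit that $\im A$ \emph{contains} the closed, finite-codimensional subspace $\im G$. Passing to the quotient map $\pi\colon W\to W/\im G$ (continuous, with $W/\im G$ finite-dimensional), one has $\im A=\pi^{-1}\bigl(\pi(\im A)\bigr)$, and $\pi(\im A)$ is a subspace of the finite-dimensional space $W/\im G$, hence closed; therefore $\im A$ is closed. Combining this with the finite-dimensionality of $\ker A$ and the finite codimension of $\im A$ shows that $A=I+\D$ is Fredholm, completing the proof.
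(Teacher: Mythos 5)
Your proof is correct and rests on exactly the factorization the paper itself uses, namely $I-\D^2=(I-\D)(I+\D)=(I+\D)(I-\D)$, i.e.\ that $I-\D$ is a two-sided parametrix of $I+\D$ modulo the compact operator $\D^2$. The only difference is that where the paper (following \cite{Schechter,Zeidler}) simply invokes the standard criterion that an operator with such a parametrix is Fredholm, you unwind that criterion by hand --- deducing $\dim\ker(I+\D)<\infty$ from $\ker(I+\D)\subseteq\ker(I-\D^2)$, finite codimension of the range from $\im(I-\D^2)\subseteq\im(I+\D)$, and closedness of the range via the quotient by the closed finite-codimensional subspace $\im(I-\D^2)$ --- all of which is carried out correctly.
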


Setting $\D=\B\A^{-1}\in\LL(W^\ga)$, we prove that $\D^2\in\LL(W^\ga)$ is compact
(while $\D$ alone can hardly be compact, being a type of a partial integral operator).
This actually means that $\D^2$ has smoothing property. In fact, $\D^2$ is basically the same 
as the operator $D^2$, that we used in the proof of Theorem~\ref{thm:classical}.

Since $I-\D^2=(I-\D)(I+\D)=(I+\D)(I-\D)$,  the operator $I-\D$
is a parametrix of $I+\D$. It follows that the operator $\A+\B$ admits an
equivalent regularization in the form of the right parametrix $\A^{-1}(I-\B\A^{-1})$.

\section*{Acknowledgments}
This work  was 
supported by the Alexander von Humboldt Foundation and  
the DFG Research Center {\sc Matheon}
mathematics for key technologies (project D8).

\end{document}